\documentclass[a4paper,11pt]{amsart}
\usepackage{amsmath,amssymb}
\usepackage[english]{babel}
\usepackage[left=2.5cm,right=2.5cm,top=4cm,bottom=2.5cm]{geometry}
\usepackage{tikz}

\newtheorem{theorem}{Theorem}[section]
\newtheorem{lemma}[theorem]{Lemma}

\newtheorem{remark}[theorem]{Remark}
\newtheorem{proposition}[theorem]{Proposition}
\setlength{\parskip}{1mm}
\def\dfrac{\displaystyle\frac}
\def\dsum{\displaystyle\sum}
\def\dprod{\displaystyle\prod}
\begin{document}

\title[Catenary System Identification]{\textbf{\vspace{-3mm} \\}IDENTIFICATION OF A REVERSIBLE catenary SYSTEM BY THE MEASURE OF ONE COMPARTMENT}
\author[B. FADIS]{B. FADIS$^{1}$}
\address{$^{1}$Higher School of Applied Sciences of Algiers, Algeria.} 
\email{$^{1}$badreddine.f@gmail.com}
\author[B. HEBRI]{B. HEBRI$^{2}$}
\author[S.KHELIFA]{S.KHELIFA$^{3}$}
\address{$^{2,3}$University of Sciences and Technology Houari Boumediene (USTHB),\newline
\indent Faculty of Mathematics, Department of Calculus\vspace{1mm} \\P.B. 32 El-Alia, 16111, Bab Ezzouar, Algiers,\newline 
\indent Algeria.}
\email{$^{2}$bhebri@usthb.dz,\ $^{3}$ksices@yahoo.fr}

\begin{abstract}
 The known results on compartment system's identification requires observations and measures on some compartments. However, in many situations some compartments can not be measured, so the minimization of the compartment's number to be measured is a very important problem to solve. In this work, this problem had been solved for a class of compartmental systems. It is proved that all reversible catenary system is identifiable by the measure of one compartment : the principal compartment. \vspace{0.2cm}\\
\noindent\textsc{Keywords and phrases.} Differential systems; Compartmental analysis; Catenary systems; Will problems; Parameters identification.
\end{abstract}

\maketitle

\section{Introduction}
Catenary systems represents an important class in compartment systems.They are in particular used for modeling transport process through successive biological structures like, for example, the diffusion of some drugs in
human body when they are injected and diffused in the blood and finally eliminated by urinary tract. In homogenous media, modeling by using catenary systems can be very convenient to give a discrete representation
of some process. catenary systems can constitute a simplification of a complex process to estimate cellular kinetic parameters (see Golgi [6]).\vspace{0.2cm}\\
Catenary systems are also used in ecology for the representation of an age structured population. Some equivalents models had been proposed in epidemiology for the Bilharzia (see Cohen \& Rochet [5]). All known
identification's methods for this type of models are essentially based on empiric algorithms requiring measurements on several compartments see [1],[2],[3],[4], to the different posed problems in compartmental analysis
and their best comprehension.\vspace{1cm}

\section{Preliminary results}

\subsection{Some generalities on catenary systems}\textcolor[rgb]{1.00,1.00,1.00}{aaaa}\vspace{0.2cm}\\
Let $n$ be an integer greater than $3$. A catenary system is a
compartmental system which can be drafted by :
\begin{figure}[h]
\begin{tikzpicture}[transform shape, scale = 0.8]
\clip(4.189032095827969,2.591002257939712) rectangle (20.979296498549257,5.601837812440735);
\draw(5.24,4.1) circle (0.8883692925805119cm);
\draw(8.48,4.13) circle (0.9128094311091632cm);
\draw(11.74,4.11) circle (0.8975231574396313cm);
\draw(16.34,4.09) circle (0.8883692925805122cm);
\draw(19.58,4.09) circle (0.8892099407648747cm);
\draw [->] (6.038438288250357,4.4894820918294425) -- (7.641414237713173,4.490548161570204);
\draw [->] (7.678372418819462,3.6934060483555986) -- (6.030687194405471,3.695013876036222);
\draw [->] (9.323317256286447,4.479338035105443) -- (10.940465799061892,4.480975538071364);
\draw [->] (10.95018887918184,3.683682968235651) -- (9.28168683453003,3.693514860661836);
\draw [->] (17.1397317152207,4.476819316568023) -- (18.77958825826795,4.477343984660325);
\draw [->] (18.796714535979596,3.6690983480557295) -- (17.121852913796356,3.668210928083544);
\draw (6.45,5.15) node[anchor=north west] {$k_{12}$};
\draw (6.45,3.7) node[anchor=north west] {$k_{21}$};
\draw (9.7,5.15) node[anchor=north west] {$k_{23}$};
\draw (9.7,3.7) node[anchor=north west] {$k_{32}$};
\draw (17.20,5.15) node[anchor=north west] {$k_{(n-1)n}$};
\draw (17.20,3.7) node[anchor=north west] {$k_{n(n-1)}$};
\draw (5.,4.35) node[anchor=north west] {1};
\draw (8.22,4.35) node[anchor=north west] {2};
\draw (11.5,4.35) node[anchor=north west] {3};
\draw (15.73,4.35) node[anchor=north west] {$n-1$};
\draw (19.3,4.33) node[anchor=north west] {$n$};
\draw (13.6,4.2) node[anchor=north west] {$\ldots$};
\end{tikzpicture}\vspace{-0.5cm}
\end{figure}

The exchange coefficients $k_{ij}$ for this kind of systems are characterized by:
\[
\left\{
\begin{array}{l}
k_{ij}=0,\ \forall (i,j)\in \left\{ 1,2,\ldots ,n\right\} ^{2},\ \left\vert i-j\right\vert \geq 2, \vspace{0.2cm}\\
k_{i(i+1)}\neq 0\;;\;\forall i\in \left\{ 1,2,\ldots ,n-1\right\} .%
\end{array}%
\right.\vspace{0.4cm}
\]
Catenary systems can be classified into two classes:
\begin{itemize}
\item the reversible systems caracterized by:
\[
k_{(j+1)j}\neq 0,\ \forall j\in \left\{ 1,2,\ldots ,n-1\right\},
\]
\item the irreversible systems caracterized by:
\[
\exists j_{0}\in \left\{ 1,2,\ldots ,n-1\right\} \;\text{such that }%
k_{(j_{0}+1)j_{0}}=0.\vspace{0.3cm}
\]
\end{itemize}
These systems are governed by the following differential system noted (S1), given by:
\begin{equation}
\left\{
\begin{array}{l}
x_{1}^{\prime }(t)=k_{11}x_{1}(t)+k_{21}x_{2}(t),\vspace{0.2cm} \\
x_{j}^{\prime}(t)=k_{(j-1)j}x_{j-1}(t)+k_{jj}x_{j}(t)+k_{(j+1)j}x_{j+1}(t),\ \forall j\in \left\{ 2,3,\ldots ,n-1\right\}, \vspace{0.2cm} \\
x_{n}^{\prime }(t)=k_{(n-1)n}x_{n-1}(t)+k_{nn}x_{n}(t),
\end{array}%
\right.  \tag{S1}  \label{0}\vspace{0.2cm}
\end{equation}%
where
\[
\left\{
\begin{array}{l}
k_{11}=-(k_{12}+k_{1e}), \vspace{0.2cm}\\
k_{jj}=-(k_{j(j-1)}+k_{j(j+1)})\;,\;\;\forall j\in \left\{ 2,3,\ldots
,n-1\right\} , \vspace{0.2cm}\\
k_{nn}=-k_{n(n-1)}.%
\end{array}%
\right.\vspace{0.2cm}
\]

We associate the matrix $A$, $A=\left( k_{ij}\right) _{1\leq i,j\leq n}$ to this open $n$-compartmental system. We can remark that $A$ is a tridiagonal and compartmental matrix so its eigenvalues noted $\lambda _{i}$ for $i\in
\left\{ 1,2,\ldots ,n\right\} $ are reals, strictly negatives and different (see [8]). The general solution of the system (S1) can be written under the
form:
\[
x_{j}(t)=\dsum\limits_{i=1}^{n}\beta _{i}^{j}\exp (\lambda_{i}t),\ \forall j\in \left\{ 1,2,\ldots ,n\right\},
\]
where $\left( \beta _{i}^{j}\right) _{1\leq i\leq n}$ is the column $j$ of
the matrix $B$ of elementaries masses associated to the compartment $j$ (see [8]).\vspace{0.2cm}\\
Measuring the primary compartment (the catalyser of the reaction) we can
determine the coefficients $\beta _{1}^{1},\beta _{2}^{1},\ldots ,\beta_{n}^{1},\lambda _{1},\lambda _{2},\ldots,\lambda _{n}$, noted by $\beta _{1}^{1*},\beta_{2}^{1*},\ldots,\beta_{n}^{1*},\lambda _{1}^{*},\lambda _{2}^{*},\ldots,\lambda _{n}^{*}$ after
minimization of the functional $J$ introduced by Y. Cherruault (see [3]):
\[
J(\beta _{1}^{1},\;\beta _{2}^{1},\ldots,\beta _{n}^{1}\ ,\lambda_{1},\lambda_{2},\ldots ,\lambda _{n})=\dsum\limits_{j=1}^{m}\dsum\limits_{i=1}^{n}\left( \beta _{i}^{1}\exp (\lambda _{i}.t_{j})-\overline{x}_{1}(t_{j})\right)^{2},
\]
where $\overline{x}_{1}(t_{j})$ is the measure of the primary compartment at time $t_{j}$ for $j\ $in$\ \left\{ 1,2,3,\ldots ,m\right\} $. These coefficients will be noted : $\beta _{1}^{1\ast },\beta _{2}^{1\ast },\ldots
,\beta _{n}^{1\ast },$ $\lambda _{1}^{\ast },\lambda _{2}^{\ast },\ldots,\lambda _{n}^{\ast}$.\vspace{0.2cm}\\
Thus, the general solution of the system (S1)\ can be writen:
\[
\begin{tabular}{c}
$x_{1}(t)=\dsum\limits_{i=1}^{n}\beta _{i}^{1\ast }\exp (\lambda _{i}^{\ast}t),$ \\
$x_{j}(t)=\dsum\limits_{i=1}^{n}\beta _{i}^{j}\exp (\lambda _{i}^{\ast}t)\;,\ \forall j\in \left\{ 2,3,\ldots,n\right\}$,
\end{tabular}%
\]
where the coefficients $\beta _{i}^{j}$ for $j\in \left\{ 2,3,\ldots
,n\right\} $\ and $i\in \left\{ 1,2,\ldots ,n\right\} $\ remain unknowns.%
\vspace{0.35cm}\newline
In order to identify the all compartmental system, an injection of substance is made in the primary compartment. So the system (S1) is completed by the initial conditions:
\[
x_{1}(0)=a\;\;\;\text{and}\;\;\;x_{j}(0)=0\;,\;\;\forall j\in \left\{
2,3,\ldots ,n\right\}.
\]
Hence
\begin{equation}
\left\{
\begin{array}{l}
\dsum\limits_{i=1}^{n}\beta _{i}^{1\ast }=x_{1}(0)=a,\vspace{0.2cm} \\
\dsum\limits_{i=1}^{n}\beta _{i}^{j}=x_{j}(0)=0,\ \forall j\in \left\{2,3,\ldots,n\right\} \;.%
\end{array}%
\right.  \tag{2.1}  \label{1}
\end{equation}%
\smallskip\ The linear independency of the functions $\varphi _{i}$, $i\in\left\{ 1,2,\ldots ,n\right\} $ defined by:
\[
\varphi _{i}(t)=\exp (\lambda _{i}^{\ast}t),\ \forall t\geqslant0,\ \forall i\in \left\{ 1,2,\ldots ,n\right\},
\]
gives, for all integer $i$ in $\left\{ 1,2,\ldots ,n\right\} $:\vspace{0.2cm}
\begin{equation}
\left\{
\begin{array}{l}
\lambda _{i}^{\ast }\beta _{i}^{1\ast }=k_{11}\beta _{i}^{1\ast}+k_{21}\beta _{i}^{2}\;,\vspace{0.2cm} \\
\lambda _{i}^{\ast }\beta _{i}^{j}=k_{(j-1)j}\beta _{i}^{j-1}+k_{jj}\beta_{i}^{j}+k_{(j+1)j}\beta _{i}^{j+1},\ \forall j\in \left\{ 2,3,\ldots,n-1\right\},\vspace{0.2cm} \\
\lambda _{i}^{\ast }\beta _{i}^{n}=k_{(n-1)n}\beta _{i}^{n-1}+k_{nn}\beta_{i}^{n}.\vspace{0.2cm}%
\end{array}%
\right.  \tag{2.2}
\end{equation}%
In the following we need to introduce the following notation, for integer $j$ in $\left\{ 1,2,\ldots ,n\right\} $ and integer $l$ in $\left\{ 0,1,\ldots ,n\right\} $, let $\Delta _{j}^{l}$ be:
\[
\Delta _{j}^{l}=\dsum\limits_{i=1}^{n}\left( \lambda _{i}^{\ast }\right)
^{l}\beta _{i}^{j}\;.
\]
\subsection{Preliminary results}\textcolor[rgb]{1.00,1.00,1.00}{aaa}\vspace{0.2cm}\\
With the previous notations, we have the following results:
\begin{lemma}\label{lem1}
The following relationships hold:
\begin{itemize}
  \item [$i)$]
  \begin{equation}
\Delta _{1}^{0}=a\;\;\;\text{and\ \ \ }\Delta _{j}^{0}=0,\ \forall j\in\left\{ 2,3,\ldots ,n\right\}.\vspace{0.2cm}  \tag{2.3}
\end{equation}%
  \item  [$ii)$]
  \begin{equation}
\Delta _{1}^{1}=k_{11}a.\vspace{0.2cm}  \tag{2.4}
\end{equation}%
  \item [$iii)$] $\forall j\in \left\{ 2,3,\ldots ,n-1\right\}$,
\begin{equation}
\Delta _{j}^{l}=k_{(j-1)j}\Delta _{j-1}^{l-1}+k_{jj}\Delta_{j}^{l-1}+k_{(j+1)j}\Delta _{j+1}^{l-1},\ \forall l\in \left\{1,2,\ldots ,j\right\}.  \tag{2.5}
\end{equation}
\item [$iv)$]
\begin{equation}
\Delta _{n}^{l}=k_{(n-1)n}\Delta _{n-1}^{l-1}+k_{nn}\Delta_{n}^{l-1},\ \forall l\in \left\{ 1,2,\ldots ,n\right\}.
\tag{2.6}
\end{equation}
\end{itemize}
\end{lemma}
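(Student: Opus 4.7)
The plan is to derive all four identities directly from the scalar relations already recorded in (2.1) and (2.2); no induction or clever manipulation is required, only the linearity of the operation ``multiply by $(\lambda_i^\ast)^{l-1}$ and sum over $i$''.

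First I would dispose of part $(i)$ by simply noting that $\Delta_j^0=\sum_{i=1}^n\beta_i^j=x_j(0)$, so the initial conditions collected in (2.1) give $\Delta_1^0=a$ and $\Delta_j^0=0$ for $j\geq 2$. For part $(ii)$, I take the first identity in (2.2), namely $\lambda_i^\ast\beta_i^{1\ast}=k_{11}\beta_i^{1\ast}+k_{21}\beta_i^{2}$, sum it over $i\in\{1,\dots,n\}$, and recognize the result as $\Delta_1^1=k_{11}\Delta_1^0+k_{21}\Delta_2^0$; applying $(i)$ immediately yields $\Delta_1^1=k_{11}a$.

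For parts $(iii)$ and $(iv)$, the strategy is identical. I would start from the middle (respectively last) line of (2.2), multiply both sides by $(\lambda_i^\ast)^{l-1}$, and sum over $i$. Using only the definition $\Delta_j^l=\sum_{i=1}^n(\lambda_i^\ast)^l\beta_i^j$ and the fact that the coefficients $k_{(j-1)j}$, $k_{jj}$, $k_{(j+1)j}$ do not depend on $i$, one obtains exactly (2.5) for $j\in\{2,\dots,n-1\}$ and (2.6) for $j=n$. The bound $l\leq j$ in $(iii)$ is not required to establish the identity itself — the recursion is algebraically valid for every $l\geq 1$ — it is simply the range in which the authors will later be allowed to invoke the formula, and so it can be stated as is.

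There is no real obstacle here: the whole content of the lemma is the observation that the operator $\beta\mapsto\sum_i(\lambda_i^\ast)^{l-1}\beta_i$ commutes with the constant coefficients of $(S1)$. The only point worth double–checking when writing up the proof is that, for part $(iii)$ with $l=1$, the required quantities $\Delta_{j-1}^0,\Delta_j^0,\Delta_{j+1}^0$ are covered by part $(i)$, so the recursion is meaningful at its base step.
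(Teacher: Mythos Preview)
Your proposal is correct and follows essentially the same approach as the paper: for each part you invoke (2.1) or (2.2), multiply by $(\lambda_i^\ast)^{l-1}$ where needed, sum over $i$, and read off the result from the definition of $\Delta_j^l$. Your remark that the restriction $l\le j$ in $(iii)$ is not actually used in the derivation is accurate and worth keeping.
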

\begin{proof}\textcolor[rgb]{1.00,1.00,1.00}{aaa}\vspace{-0.8cm}\\
\begin{itemize}
  \item [$i)$]
  $\Delta _{j}^{0}$\ represent initial conditions then the results hold.
  \item  [$ii)$] We recall that:
\[
\Delta _{1}^{1}=\dsum\limits_{i=1}^{n}\left( \lambda _{i}^{\ast }\right)^{1}\beta _{i}^{1}=\dsum\limits_{i=1}^{n}\lambda _{i}^{\ast }\beta_{i}^{1\ast }.
\]%
Then due to the relationship (2.2) we can write that:
\[
\Delta _{1}^{1}=\dsum\limits_{i=1}^{n}\left( k_{11}\beta _{i}^{1\ast}+k_{21}\beta _{i}^{2}\right).\]%
Or
\[
\Delta _{1}^{1}=k_{11}\dsum\limits_{i=1}^{n}\beta _{i}^{1\ast}+k_{21}\dsum\limits_{i=1}^{n}\beta _{i}^{2}.
\]%
Consequently : $$\Delta _{1}^{1}=k_{11}a.$$
  \item [$iii)$]
  Let $j$ and $l$ be two integers such that $j\in \left\{ 2,3,\ldots,n-1\right\} $ and $l\in \left\{ 1,2,\ldots ,j\right\} $, we have:
\[
\Delta _{j}^{l}=\dsum\limits_{i=1}^{n}\left( \lambda _{i}^{\ast }\right)^{l}\beta _{i}^{j}=\dsum\limits_{i=1}^{n}\left( \lambda _{i}^{\ast }\right)^{l-1}\lambda _{i}^{\ast }\beta _{i}^{j},\]%
then, the relationship (2.2) gives:
\[
\Delta _{j}^{l}=\dsum\limits_{i=1}^{n}\left( \lambda _{i}^{\ast }\right)^{l-1}\left( k_{(j-1)j}\beta _{i}^{j-1}+k_{jj}\beta_{i}^{j}+k_{(j+1)j}\beta _{i}^{j+1}\right),\]%
or
\[
\Delta _{j}^{l}=k_{(j-1)j}\dsum\limits_{i=1}^{n}\left( \lambda _{i}^{\ast}\right)^{l-1}\beta _{i}^{j-1}+k_{jj}\dsum\limits_{i=1}^{n}\left( \lambda_{i}^{\ast}\right)^{l-1}\beta _{i}^{j}+k_{(j+1)j}\dsum\limits_{i=1}^{n}\left( \lambda _{i}^{\ast }\right)^{l-1}\beta _{i}^{j+1}.
\]%
The definition of $\Delta _{q}^{p}$ let us conclude that:
\[
\Delta _{j}^{l}=k_{(j-1)j}\Delta _{j-1}^{l-1}+k_{jj}\Delta_{j}^{l-1}+k_{(j+1)j}\Delta _{j+1}^{l-1}.\vspace{0.2cm}
\]%
\item[$iv)$]
Let $l$ be an integer in $\left\{ 1,2,\ldots ,n\right\} ,$ we have:
\[
\Delta _{n}^{l}=\dsum\limits_{i=1}^{n}\left( \lambda _{i}^{\ast }\right)^{l}\beta _{i}^{n}=\dsum\limits_{i=1}^{n}\left( \lambda _{i}^{\ast }\right)^{l-1}\lambda _{i}^{\ast }\beta _{i}^{n}.
\]%
Then, the relationship (2.2) gives:
\[
\Delta _{n}^{l}=\dsum\limits_{i=1}^{n}\left( \lambda _{i}^{\ast }\right)^{l-1}\left( k_{(n-1)n}\beta _{i}^{n-1}+k_{nn}\beta _{i}^{n}\right),
\]%
or
\[
\Delta _{n}^{l}=k_{(n-1)n}\dsum\limits_{i=1}^{n}\left( \lambda _{i}^{\ast}\right) ^{l-1}\beta _{i}^{n-1}+k_{nn}\dsum\limits_{i=1}^{n}\left( \lambda_{i}^{\ast }\right) ^{l-1}\beta _{i}^{n}.
\]%
The definition of $\Delta _{q}^{p}$ let us conclude that:
\[
\Delta _{n}^{l}=k_{(n-1)n}\Delta _{n-1}^{l-1}+k_{nn}\Delta _{n}^{l-1}.
\]
\end{itemize}
\end{proof}

\begin{lemma}\label{lem2} Let $n$ be an integer strictly greater than $4$. Then:
\[
\Delta _{j}^{l}=0,\ \forall j\in \left\{ 3,4,\ldots ,n\right\},\ \forall l\in \left\{ 1,2,\ldots ,n-2\right\},\ j-l\geqslant 2.%
\vspace{0.35cm}
\]
\end{lemma}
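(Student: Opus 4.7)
The plan is to proceed by induction on $l$, using the recurrence relations (2.5) and (2.6) from Lemma \ref{lem1} together with the initial-condition identities in (2.3). The key observation is that each recurrence reduces the superscript $l$ by one while shifting the subscript $j$ by at most one, so the ``gap'' $j-l$ is preserved or increased. Since the hypothesis $j - l \geqslant 2$ cuts out a triangular region in the $(j,l)$-grid that is invariant under these shifts, induction on $l$ is the natural tool.

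For the base case $l=1$, I would take any $j \in \{3, 4, \ldots, n\}$ and apply (2.5) if $j \leqslant n-1$ or (2.6) if $j=n$. In either case $\Delta_j^1$ expresses as a linear combination of terms $\Delta_{j'}^0$ with $j' \in \{j-1, j, j+1\}$, so in particular $j' \geqslant 2$, and Lemma \ref{lem1}(i) forces each such term to vanish.

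For the inductive step, assume the conclusion holds at level $l$ and take $l+1 \leqslant n-2$ together with an index $j$ satisfying $j-(l+1) \geqslant 2$, that is, $j \geqslant l+3$. Applying (2.5) for $j \leqslant n-1$ or (2.6) for $j=n$ expresses $\Delta_j^{l+1}$ as a linear combination of $\Delta_{j-1}^l$, $\Delta_j^l$ and (if $j < n$) $\Delta_{j+1}^l$. I then verify that each of these three terms falls under the induction hypothesis: from $j \geqslant l+3$ one gets $(j-1)-l \geqslant 2$, $j-l \geqslant 3 \geqslant 2$, and $(j+1)-l \geqslant 4 \geqslant 2$, while simultaneously $j-1 \geqslant l+2 \geqslant 3$, so every index is in the allowed range $\{3,\ldots,n\}$. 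Thus each summand vanishes by the inductive hypothesis, and consequently $\Delta_j^{l+1}=0$.

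The step that requires the most care is the index bookkeeping: one must check that the recurrence (2.5) is legitimately applicable, i.e.\ that $l+1 \leqslant j$ (which follows from $j \geqslant l+3$), that the shifted indices remain in $\{3, \ldots, n\}$ so the inductive hypothesis truly applies, and that the boundary case $j=n$ is handled by the two-term recurrence (2.6) rather than (2.5). Beyond this triangular-region argument, no other ingredient is needed; the proof is a clean double induction essentially driven by the vanishing of $\Delta_j^0$ for $j \geqslant 2$.
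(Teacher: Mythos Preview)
Your proposal is correct and follows essentially the same approach as the paper: induction on $l$, with the base case $l=1$ handled via (2.3), and the inductive step split into the cases $j\leqslant n-1$ (using (2.5)) and $j=n$ (using (2.6)), checking that the shifted indices $j-1,j,j+1$ still satisfy the gap condition $j'-l\geqslant 2$. Your write-up is in fact a bit more careful than the paper's about verifying that the hypotheses of (2.5) (namely $l+1\leqslant j$) and the index ranges for the induction hypothesis are actually met.
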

\begin{proof}
The proof of this lemma is based on a recurrence process.\newline
$i)$ In the first step we set $l=1$:

\textbf{a-} let $j$ be an integer in $\left\{ 3,4,\ldots ,n-1\right\} .$
Owing to the relationship (2.5), we have:
\[
\Delta _{j}^{1}=k_{(j-1)j}\Delta _{j-1}^{0}+k_{jj}\Delta_{j}^{0}+k_{(j+1)j}\Delta _{j+1}^{0},
\]%
the relationships (2.3) let us conclude that:
\[
\Delta _{j}^{1}=0.
\]

\textbf{b-} when $j=n$, the relationship (2.6) gives:
\[
\Delta _{n}^{1}=k_{(n-1)n}\Delta _{n-1}^{0}+k_{nn}\Delta _{n}^{0},
\]%
so the relationships(2.3) let us conclude that:
\[
\Delta _{n}^{1}=0.\vspace{0.25cm}
\]%
$ii)$ In the last step we assume that for all integer $l$ in $\left\{2,\ldots ,n-3\right\} $ and for all integer $j$ in $\left\{ l+2,\ldots,n\right\} $ we have :
\[
\Delta _{j}^{l}=0,
\]

\textbf{a-} for any integer $j$ in $\left\{ l+3,\ldots ,n-1\right\} ,$ owing to the relationship (2.5), we have:
\[
\Delta _{j}^{l+1}=k_{(j-1)j}\Delta _{j-1}^{l}+k_{jj}\Delta_{j}^{l}+k_{(j+1)j}\Delta _{j+1}^{l},
\]%
then the recurrence hypothesis permits to affirm that:
\[
\Delta _{j-1}^{l}=\Delta _{j}^{l}=\Delta _{j+1}^{l}=0,
\]%
because $j-1-l\geqslant 2$ , $j-l\geqslant 3$\ and $j+1-l\geqslant 4$, so we can conclude that:
\[
\Delta _{j}^{l+1}=0.
\]

\textbf{b-} for $j=n$, the same process\ gives:
\[
\Delta _{n}^{l+1}=0.
\]
\end{proof}
\begin{lemma}\label{lem3} For all integer $l$ in $\left\{ 1,2,\ldots,n-1\right\} $ the following relationship holds:
\[
\Delta _{l+1}^{l}=a\left( \dprod\limits_{i=1}^{l}k_{i(i+1)}\right).
\]%
\end{lemma}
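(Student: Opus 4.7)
The plan is to prove this by induction on $l$, reading the claimed identity as ``the telescoping head'' $\Delta_{l+1}^{l}$ of the triangle of vanishing $\Delta_j^l$ established in Lemma~\ref{lem2}.

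For the base case $l=1$, I would use the recurrence (2.5) with $j=2$ (valid as soon as $n\geqslant 3$) to write
\[
\Delta_{2}^{1}=k_{12}\Delta_{1}^{0}+k_{22}\Delta_{2}^{0}+k_{32}\Delta_{3}^{0},
\]
and then apply the initial relations (2.3) from Lemma~\ref{lem1} $i)$, which give $\Delta_{1}^{0}=a$ and $\Delta_{2}^{0}=\Delta_{3}^{0}=0$. This yields $\Delta_{2}^{1}=a\,k_{12}$, which matches the product formula for $l=1$.

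For the inductive step, assume $\Delta_{l}^{l-1}=a\prod_{i=1}^{l-1}k_{i(i+1)}$. I would split two cases according to whether the right-hand compartment indices stay in the interior or reach the end of the chain. If $1\leqslant l\leqslant n-2$, I apply (2.5) with $j=l+1$:
\[
\Delta_{l+1}^{l}=k_{l(l+1)}\Delta_{l}^{l-1}+k_{(l+1)(l+1)}\Delta_{l+1}^{l-1}+k_{(l+2)(l+1)}\Delta_{l+2}^{l-1}.
\]
The last two $\Delta$'s vanish by Lemma~\ref{lem2}, since $(l+1)-(l-1)=2\geqslant 2$ and $(l+2)-(l-1)=3\geqslant 2$, and $l-1\leqslant n-3\leqslant n-2$ so the lemma's range on the upper index is satisfied. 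Hence $\Delta_{l+1}^{l}=k_{l(l+1)}\Delta_{l}^{l-1}$, and the induction hypothesis closes the step. If $l=n-1$, the compartment $l+1=n$ is terminal, so I use (2.6) instead:
\[
\Delta_{n}^{n-1}=k_{(n-1)n}\Delta_{n-1}^{n-2}+k_{nn}\Delta_{n}^{n-2},
\]
and observe that $\Delta_{n}^{n-2}=0$ by Lemma~\ref{lem2} (with $j=n$, $l=n-2$, $j-l=2$), which again reduces the recurrence to the single surviving term.

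The only delicate point is making sure the indices fall within the hypotheses of Lemma~\ref{lem2}; since that lemma is stated for $n>4$, one should also check the small cases $n=3,4$ directly, where the base case and one or two recursive steps of (2.5)--(2.6) combined with (2.3) suffice to reach $\Delta_{n}^{n-1}$ with no appeal to Lemma~\ref{lem2}. Beyond this bookkeeping, the argument is a routine collapse of the three-term recurrence to its leading term, so no real analytic difficulty arises.
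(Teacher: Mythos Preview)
Your proposal is correct and follows essentially the same route as the paper: induction on $l$ with base case via (2.5) and (2.3), the interior step $2\leqslant l\leqslant n-2$ via (2.5) together with Lemma~\ref{lem2}, and the terminal step $l=n-1$ via (2.6) and Lemma~\ref{lem2}. Your explicit verification of the index constraints in Lemma~\ref{lem2} and your remark that the small cases $n=3,4$ must be handled separately (since Lemma~\ref{lem2} is stated only for $n>4$) are in fact more careful than the paper's own proof, which applies Lemma~\ref{lem2} without comment on these points.
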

\begin{proof}
To show this result we use a recurrence process like in the previous lemma.%
\vspace{0.25cm} \newline
\textbf{1}${^{\circ }}$ For $l=1,$ we have:
\[
\Delta _{2}^{1}=\dsum\limits_{i=1}^{n}\lambda _{i}^{\ast }\beta _{i}^{2},
\]%
due to the relationship (2.5), we have:
\[
\Delta _{2}^{1}=k_{12}\Delta _{1}^{0}+k_{22}\Delta _{2}^{0}+k_{32}\Delta_{3}^{0}.
\]%
Using relationship (2.3) we can conclude:
\[
\Delta _{2}^{1}=a.k_{12}+0.k_{22}+0.k_{32}=a.k_{12}.\vspace{0.25cm}
\]%
\textbf{2}${^{\circ }}$ Assume that for all integer $l$ in $\left\{ 2,\ldots ,(p-1)\right\} $ with $p\leqslant (n-2)$ we have:
\[
\Delta _{l+1}^{l}=a\left( \dprod\limits_{i=1}^{l}k_{i(i+1)}\right).
\]%
By definition, we know that:
\[
\Delta _{p+1}^{p}=\dsum\limits_{i=1}^{n}\left( \lambda _{i}^{\ast }\right)
^{p}\beta _{i}^{p+1},
\]%
relationship (2.5), gives:
\[
\Delta _{p+1}^{p}=k_{p(p+1)}\Delta _{p}^{p-1}+k_{(p+1)(p+1)}\Delta_{p+1}^{p-1}+k_{(p+2)(p+1)}\Delta _{p+2}^{p-1}.
\]%
As Lemma \ref{lem2}:
\[
\Delta _{p+1}^{p-1}=\Delta _{p+2}^{p-1}=0,
\]%
so we can conclude that:
\[
\Delta _{p+1}^{p}=a\left( \dprod\limits_{i=1}^{p}k_{i(i+1)}\right).%
\vspace{0.25cm}
\]%
\textbf{3}${^{\circ }}$ When $l=n-1,$ using relationship (2.6), Lemma \ref{lem2} and the recurrence hypothesis, we obtain:
\[
\Delta _{n}^{n-1}=a.k_{(n-1)n}.\left(\dprod\limits_{i=1}^{n-2}k_{i(i+1)}\right)+0.k_{nn},
\]%
or else:
\[
\Delta _{n}^{n-1}=a\left( \dprod\limits_{i=1}^{n-1}k_{i(i+1)}\right).%
\vspace{0.5cm}
\]\end{proof}
\begin{remark}\label{rem1}
Knowing that $a\neq 0,\ $from Lemma \ref{lem3} it becomes:
\[
\Delta _{l+1}^{l}\neq 0,\ \forall l\in \left\{ 1,2,\ldots ,n-1\right\}.
\]%
\end{remark}

\section{Main result}

\subsection{General relationships}\textcolor[rgb]{1.00,1.00,1.00}{aaa}\vspace{0.2cm}\\
The main result can be obtained as a consequence of the two following propositions. The compartmental matrix $A$ and the matrix $B$ of elementaries masses (see [8]) will be alternately identified, respectively row after
row and column after column.
\begin{proposition}\label{prop1} If the catenary system (S1) is reversible, then the following relationships hold:\vspace{0.2cm}\\
$i)$\textit{\ }%
\[
k_{11}=\dfrac{\dsum\limits_{i=1}^{n}\lambda _{i}^{\ast }\beta _{i}^{1\ast }}{a}\cdot
\]%
$ii)$\textit{\ }%
\[
k_{21}=\dfrac{\Delta _{1}^{2}-k_{11}\Delta _{1}^{1}}{ak_{12}}\;.
\]%
$iii)$
\[
\beta _{i}^{2}=\dfrac{\lambda _{i}^{\ast }\beta _{i}^{1\ast }-k_{11}\beta_{i}^{1\ast }}{k_{21}}\;,\;\;\forall i\in \left\{ 1,2,\ldots ,n\right\}.
\]
\end{proposition}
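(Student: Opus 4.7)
The plan is to derive the three identities in order, using only Lemma \ref{lem1}, Lemma \ref{lem3}, and the eigen-relations (2.2). The divisions will be legitimate because $a\neq 0$ (injected mass), $k_{12}\neq 0$ (catenary structure), and $k_{21}\neq 0$ (reversibility).

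For part $i)$, I would simply invoke Lemma \ref{lem1} $ii)$: by definition $\Delta_{1}^{1}=\sum_{i=1}^{n}\lambda_{i}^{*}\beta_{i}^{1*}$, and Lemma \ref{lem1} $ii)$ states $\Delta_{1}^{1}=k_{11}a$. Dividing by $a$ gives the claim. This step is essentially tautological.

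For part $ii)$, the strategy is to compute $\Delta_{1}^{2}$ by lowering one power of $\lambda_{i}^{*}$ via the \emph{first} row of (2.2). Writing $(\lambda_{i}^{*})^{2}\beta_{i}^{1*}=\lambda_{i}^{*}(\lambda_{i}^{*}\beta_{i}^{1*})$ and substituting $\lambda_{i}^{*}\beta_{i}^{1*}=k_{11}\beta_{i}^{1*}+k_{21}\beta_{i}^{2}$, then summing over $i$, produces
\[
\Delta_{1}^{2}=k_{11}\Delta_{1}^{1}+k_{21}\Delta_{2}^{1}.
\]
By Lemma \ref{lem3} applied with $l=1$, $\Delta_{2}^{1}=ak_{12}$. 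Since $k_{12}\neq 0$ and, by reversibility, $k_{21}\neq 0$, we may solve linearly for $k_{21}$ to obtain the stated formula. Note that (2.5) does not apply when $j=1$, so one genuinely needs (2.2) here rather than the $\Delta$-recurrence of Lemma \ref{lem1}; this is the one point where care is required, and it is the closest thing to an obstacle in the argument.

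For part $iii)$, the first row of (2.2) reads $\lambda_{i}^{*}\beta_{i}^{1*}=k_{11}\beta_{i}^{1*}+k_{21}\beta_{i}^{2}$ for each $i$. Since $k_{21}\neq 0$ has just been justified (and computed) in part $ii)$, we may solve for $\beta_{i}^{2}$ directly, yielding the formula. The three parts together then deliver the first row of $A$ and the second column of $B$, consistent with the alternating row/column identification scheme announced before the proposition.
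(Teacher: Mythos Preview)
Your argument is correct and follows the same three-step route as the paper: Lemma~\ref{lem1}$\,ii)$ for part~$i)$, the recurrence $\Delta_{1}^{2}=k_{11}\Delta_{1}^{1}+k_{21}\Delta_{2}^{1}$ together with $\Delta_{2}^{1}=ak_{12}$ for part~$ii)$, and the first line of~(2.2) for part~$iii)$. Your derivation of the $j=1$ recurrence directly from~(2.2) is in fact more careful than the paper, which simply cites~(2.5) with $j=1$ even though~(2.5) is stated only for $j\in\{2,\dots,n-1\}$.
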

\begin{proof}\textcolor[rgb]{1.00,1.00,1.00}{aaa}\vspace{0.2cm}\\
$i)$ Knowing that :
\[
\Delta _{1}^{1}=\dsum\limits_{i=1}^{n}\lambda _{i}^{\ast }\beta _{i}^{1\ast},
\]%
relationship(2.4) gives directly the result.\vspace{0.2cm}\\
$ii)$ The relationship (2.5) used for $l=2,\ j=1$ gives:%
\[
\Delta _{1}^{2}=k_{11}\Delta _{1}^{1}+k_{21}\Delta _{2}^{1},
\]%
and
\[
k_{21}=\dfrac{\Delta _{1}^{2}-k_{11}\Delta _{1}^{1}}{\Delta _{2}^{1}}=\dfrac{\Delta _{1}^{2}-k_{11}\Delta _{1}^{1}}{ak_{12}}\cdot
\]%
\vspace{0.1cm}\newline
$iii)$ Recall that the relationship (2.2) is given, after minimization of $J$, by:
\[
\lambda _{i}^{\ast }.\beta _{i}^{1\ast }=k_{11}\beta _{i}^{1\ast}+k_{21}.\beta _{i}^{2}\;,\;\;\forall i\in \left\{ 1,2,\ldots ,n\right\},
\]%
with a simple manipulation , we get :
\[
\beta _{i}^{2}=\dfrac{\lambda _{i}^{\ast }\beta _{i}^{1\ast }-k_{11}.\beta_{i}^{1\ast }}{k_{21}}\;,\;\;\forall i\in \left\{ 1,2,\ldots ,n\right\}.
\]
\end{proof}
\begin{remark}\label{rem}
This result says that the second column of the matrix $B$ is identified.
\end{remark}
\begin{proposition}\label{prop2} If the catenary system (S1) is
reversible, then the following relationships hold:\vspace{0.2cm}\\
$i)$
\[
k_{jj}=\dfrac{\Delta _{j}^{j}-k_{(j-1)j}\Delta _{j-1}^{j-1}}{\Delta_{j}^{j-1}},\ \forall \ j\in \left\{ 2,3,\ldots ,n-1\right\}.
\]%
$ii)$\textit{\ }%
\[
k_{(j+1)j}=\dfrac{\Delta _{j}^{j+1}-k_{(j-1)j}\Delta_{j-1}^{j}-k_{jj}\Delta _{j}^{j}}{\Delta _{j+1}^{j}},\ \forall \ j\in \left\{ 2,3,\ldots ,n-1\right\}.
\]%
$iii)$\textit{\ }%
\[
\beta _{i}^{j+1}=\dfrac{\lambda _{i}^{\ast }\beta _{i}^{j}-k_{(j-1)j}\beta_{i}^{j-1}-k_{jj}\beta _{i}^{j}}{k_{(j+1)j}},\ \forall i\in \left\{1,2,\ldots ,n\right\}.
\]
\end{proposition}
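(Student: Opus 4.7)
The plan is to mirror the template already used for Proposition \ref{prop1}: each of the three identities is obtained by writing the appropriate moment relation from Lemma \ref{lem1} at a carefully chosen index pair $(j,l)$, killing the single term that vanishes thanks to Lemma \ref{lem2}, and then inverting for the unknown. The denominators are shown to be nonzero by Remark \ref{rem1}, except for the last division, which uses the reversibility hypothesis directly.

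For part $i)$, I would instantiate relationship (2.5) at $l = j$, obtaining
\[
\Delta_j^j = k_{(j-1)j}\Delta_{j-1}^{j-1} + k_{jj}\Delta_j^{j-1} + k_{(j+1)j}\Delta_{j+1}^{j-1}.
\]
Since $(j+1)-(j-1) = 2$, Lemma \ref{lem2} forces $\Delta_{j+1}^{j-1} = 0$, so the last term drops out. Because Remark \ref{rem1} (applied with $l = j-1$) gives $\Delta_j^{j-1} \neq 0$, one may solve for $k_{jj}$ and obtain the claimed formula.

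For part $ii)$, the same moment identity is invoked, now at $l = j+1$ (the derivation of (2.5) in fact holds for every $l \geq 1$, even though the lemma states it only for $l \leq j$):
\[
\Delta_j^{j+1} = k_{(j-1)j}\Delta_{j-1}^j + k_{jj}\Delta_j^j + k_{(j+1)j}\Delta_{j+1}^j.
\]
This time no summand vanishes, but Remark \ref{rem1} applied at $l = j$ guarantees $\Delta_{j+1}^j \neq 0$, so one divides to isolate $k_{(j+1)j}$. For $iii)$ no use of Lemma \ref{lem2} is needed: I would start directly from the $j$-th row of (2.2),
\[
\lambda_i^{*}\beta_i^j = k_{(j-1)j}\beta_i^{j-1} + k_{jj}\beta_i^j + k_{(j+1)j}\beta_i^{j+1},
\]
and solve for $\beta_i^{j+1}$. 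Here reversibility is essential: the hypothesis $k_{(j+1)j} \neq 0$ is exactly what legitimizes the division.

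None of the three arguments is technically demanding once the correct index pair is chosen; the real work is already done by Lemmas \ref{lem2}--\ref{lem3} and Remark \ref{rem1}. The main point that deserves explicit verification is the nonvanishing of every denominator, which is split between Remark \ref{rem1} (for the formulas giving $k_{jj}$ and $k_{(j+1)j}$) and the reversibility assumption (for $\beta_i^{j+1}$). The inductive structure of the identification scheme itself — namely that at stage $j$ every quantity on the right-hand side has already been recovered from the data and from the previous stages, with Proposition \ref{prop1} providing the base case $j=2$ — is implicit in the statement but is not part of what the proposition actually asserts, so I would keep it outside the proof proper.
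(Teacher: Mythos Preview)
Your proposal is correct and follows exactly the same route as the paper: apply (2.5) at $l=j$ and use Lemma~\ref{lem2} to drop $\Delta_{j+1}^{j-1}$ for part $i)$, apply (2.5) at $l=j+1$ for part $ii)$, and rearrange (2.2) for part $iii)$, with Remark~\ref{rem1} and reversibility handling the denominators. Your remark that (2.5) is literally stated only for $l\le j$ but is needed at $l=j+1$ is a point the paper silently passes over, so your version is in fact slightly more careful.
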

\begin{proof}
We recall that:
\[
\Delta _{p+1}^{p}\neq 0,\  \forall p\in \left\{ 1,2,\ldots ,n-1\right\}.
\]%
\vspace{0.15cm}\newline
Let $j$ be an integer in $\left\{ 2,3,\ldots,n-1\right\} $.\vspace{0.15cm}%
\newline
$i)$ The relationship (2.5) with $l=j$, gives:
\[
\Delta _{j}^{j}=k_{(j-1)j}\Delta _{j-1}^{j-1}+k_{jj}\Delta_{j}^{j-1}+k_{(j+1)j}\Delta _{j+1}^{j-1}.
\]%
Lemma \ref{lem2} indicates that:
\[
\Delta _{j+1}^{j-1}=0,
\]%
so
\[
k_{jj}=\dfrac{\Delta _{j}^{j}-k_{(j-1)j}\Delta _{j-1}^{j-1}}{\Delta_{j}^{j-1}}\cdot\vspace{0.25cm}
\]%
$ii)$ Relationship (2.5) when $l=j+1,$ gives:
\[
\Delta _{j}^{j+1}=k_{(j-1)j}\Delta _{j-1}^{j}+k_{jj}\Delta_{j}^{j}+k_{(j+1)j}\Delta _{j+1}^{j}.
\]%
By a simple manipulation of this equality, we obtain:
\[
k_{(j+1)j}=\dfrac{\Delta _{j}^{j+1}-k_{(j-1)j}\Delta_{j-1}^{j}-k_{jj}\Delta _{j}^{j}}{\Delta _{j+1}^{j}}\cdot\vspace{0.25cm%
}
\]%
$iii)$ The relationship (2.2) for all integer $i\in \left\{1,2,\ldots, n\right\} $ being:
\[
\lambda _{i}^{\ast }\beta _{i}^{j}=k_{(j-1)j}.\beta _{i}^{j-1}+k_{jj}\beta_{i}^{j}+k_{(j+1)j}\beta _{i}^{j+1},
\]%
with a simple manipulation of this equality we get:
\[
\beta _{i}^{j+1}=\dfrac{\lambda _{i}^{\ast }\beta _{i}^{j}-k_{(j-1)j}\beta_{i}^{j-1}-k_{jj}\beta _{i}^{j}}{k_{(j+1)j}}\;,\;\;\forall i\in \left\{
1,2,\ldots, n\right\}.
\]%
$k_{(j+1)j}\neq 0$ due to the reversibility of the system.
\end{proof}
\subsection{Identification theorem}\textcolor[rgb]{1.00,1.00,1.00}{aaa}\vspace{0.2cm}\\
Generally in compartmental analysis the identifiability of a $\ n-$compartmental
system requires the measurement of $(n-1)$ compartments or $(n-2)$
compartments\ under some conditions (see [8]). In catenary systems, we obtain the following
optimal result :

\begin{theorem}\label{th1}
Let (S1) be a reversible catenary
compartmental system. If the primary compartment is measurable then (S1) is identifiable.
\end{theorem}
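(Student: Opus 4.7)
The plan is to identify $A$ row by row and $B$ column by column by a forward induction on the compartment index, driven by Propositions \ref{prop1} and \ref{prop2} together with the structural identities $k_{jj} = -(k_{j(j-1)} + k_{j(j+1)})$ and Lemma \ref{lem3}. The only input is the measurement of $x_1(t)$, which after minimization of $J$ yields the eigenvalues $\lambda_i^*$ and the first-column coefficients $\beta_i^{1*}$, and hence the first column of $B$ together with every moment $\Delta_1^l$.

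\textbf{Base step.} Proposition \ref{prop1}(i) identifies $k_{11}$; Proposition \ref{prop1}(ii) combined with Lemma \ref{lem3} (at $l=1$, giving $\Delta_2^1 = ak_{12}$) identifies the pair $(k_{12}, k_{21})$; Proposition \ref{prop1}(iii) then identifies the second column $\beta_i^2$ of $B$. The exit rate is recovered from $k_{1e} = -k_{11} - k_{12}$.

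\textbf{Inductive step.} Suppose that for some $j \in \{2,\ldots,n-1\}$ the first $j$ columns of $B$, the diagonals $k_{11}, \ldots, k_{(j-1)(j-1)}$, the forward rates $k_{12}, \ldots, k_{(j-1)j}$, and the backward rates $k_{21}, \ldots, k_{j(j-1)}$ are already identified. Then every sum $\Delta_p^l$ with $p \leq j$ is computable. Proposition \ref{prop2}(i) delivers $k_{jj}$; the compartmental identity produces the next forward rate $k_{j(j+1)} = -k_{jj} - k_{j(j-1)}$; Lemma \ref{lem3} at order $l=j$ then furnishes $\Delta_{j+1}^j = a\prod_{i=1}^j k_{i(i+1)}$; Proposition \ref{prop2}(ii) provides the backward rate $k_{(j+1)j}$; and finally Proposition \ref{prop2}(iii) provides the next column $\beta_i^{j+1}$. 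The inductive hypothesis is thereby reinstated at level $j+1$. At $j = n-1$ the induction terminates and the remaining diagonal $k_{nn} = -k_{n(n-1)}$ is read off, completing the identification of all $2n-1$ independent parameters of (S1).

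The main obstacle is controlling the non-vanishing of the denominators entering the identification formulas. The denominators $\Delta_j^{j-1}$ and $\Delta_{j+1}^j$ in Proposition \ref{prop2}(i)--(ii) are nonzero by Remark \ref{rem1}, while the denominators $k_{21}$ and $k_{(j+1)j}$ appearing in the $\beta$-formulas of Propositions \ref{prop1}(iii) and \ref{prop2}(iii) are nonzero precisely because (S1) is reversible. This is where the reversibility hypothesis enters essentially: in the irreversible case, the induction would collapse at the first index $j_0$ for which $k_{(j_0+1)j_0} = 0$, blocking the propagation from column $j_0$ to column $j_0+1$ of $B$.
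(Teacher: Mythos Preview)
Your inductive step matches the paper's argument and is correct. The gap is in your base step: you claim that Proposition~\ref{prop1}(ii) together with Lemma~\ref{lem3} at $l=1$ identifies the pair $(k_{12},k_{21})$, but it does not. At that stage only the first column of $B$ is available, so $\Delta_2^1=\sum_i \lambda_i^\ast \beta_i^2$ is \emph{not} computable; Lemma~\ref{lem3} merely asserts $\Delta_2^1=ak_{12}$ with both sides unknown. Indeed, Proposition~\ref{prop1}(ii) is itself obtained by substituting $\Delta_2^1=ak_{12}$ into the recurrence $\Delta_1^2=k_{11}\Delta_1^1+k_{21}\Delta_2^1$, so the two statements carry exactly the same information, namely the single relation
\[
a\,k_{12}k_{21}=\Delta_1^2-k_{11}\Delta_1^1.
\]
This pins down only the product $k_{12}k_{21}$, not the individual factors. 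Your subsequent line ``the exit rate is recovered from $k_{1e}=-k_{11}-k_{12}$'' is then circular, since $k_{12}$ has not yet been found.

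The paper closes this gap with an independent mass-balance identity (taken from references [10] and [11]):
\[
k_{1e}=\dfrac{-a}{\dsum_{i=1}^n \dfrac{\beta_i^{1\ast}}{\lambda_i^\ast}},
\]
which is computable from the first-compartment data alone. From $k_{1e}$ one gets $k_{12}=-(k_{11}+k_{1e})$, and only then does Proposition~\ref{prop1}(ii) yield $k_{21}$. Without this (or an equivalent) additional relation at $j=1$, the forward induction cannot be initialized.
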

\begin{proof}
This theorem is a consequence of Proposition \ref{prop1} and Proposition \ref{prop2}. In fact, owing to Proposition \ref{prop1}, we identify $k_{11},$ $k_{12}$,\ $k_{21}$%
, $\beta _{i}^{2},$ $\forall i\in \left\{ 1,2,\ldots ,n\right\} $ in this order as follow:

- firstly:%
\[
k_{11}=\dfrac{\dsum\limits_{i=1}^{n}\lambda _{i}^{\ast }\beta _{i}^{1\ast }}{a}\cdot
\]%
Since,
\[
k_{12}=-(k_{1e}+k_{11}),\vspace{0.1cm}
\]%
and knowing that (see [10]\ \&\ [11]):
\[
k_{1e}=\dfrac{-a}{\dsum\limits_{i=1}^{n}\dfrac{\beta _{i}^{1\ast }}{\lambda_{i}^{\ast }}},
\]%
we can get the value of $k_{12}$. Consequently, $k_{21}$ can be determined:
\[
k_{21}=\dfrac{\Delta _{1}^{2}-k_{11}\Delta _{1}^{1}}{ak_{12}},
\]%
as $\Delta _{1}^{2},\Delta _{1}^{1}\ $are known because the first column of
$B$ is known.\bigskip

- secondly:
\[
\beta _{i}^{2}=\dfrac{\lambda _{i}^{\ast }\beta _{i}^{1\ast }-k_{11}\beta_{i}^{1\ast }}{k_{21}},\ \forall i\in \left\{ 1,2,\ldots ,n\right\}.
\]%
This means that the first column \ thus the first row of the matrix $A$ are
identified and finally the second colomn of the matrix $B$ is
identified.\bigskip \newline
We suppose that at the step $j,\ j\in $ $\left\{ 1,2,\ldots ,n-1\right\} $,
the coefficients $k_{pp},$ $k_{p(p+1)},$ $k_{(p+1)p},$ $\beta _{i}^{(p+1)}$ $%
\forall i\in \left\{ 1,2,\ldots, n\right\} ,$ $\forall p\in \left\{
1,2,\ldots ,j-1\right\} $ had been identified. Owing to Proposition \ref{prop2}, we can identify $k_{jj},$ $k_{j(j+1)},$ $k_{(j+1)j},$ $\beta
_{i}^{j+1},$ $\forall i\in \left\{ 1,2,\ldots ,n\right\} $ in this order as follows:%
\[
k_{jj}=\dfrac{\Delta _{j}^{j}-k_{(j-1)j}\Delta _{j-1}^{j-1}}{\Delta_{j}^{j-1}},
\]%
and $\Delta _{j}^{j},\ \Delta _{j-1}^{j-1},\ \Delta
_{j}^{j-1}\ $are known because the columns $\left( j-1\right) $ and $j$ of
the matrix $B$ had been identified above (recurrence hypothesis). Then we can identify $k_{j(j+1)}$ because:
\[
k_{j(j+1)}=-(k_{j(j-1)}+k_{jj}),
\]%
and $k_{j(j-1)}\ $is known by recurrence hypothesis. Consequently, $%
k_{(j+1)j}$ can be determined since:%
\[
k_{(j+1)j}=\dfrac{\Delta _{j}^{j+1}-k_{(j-1)j}\Delta_{j-1}^{j}-k_{jj}\Delta _{j}^{j}}{\Delta _{j+1}^{j}}=\dfrac{\Delta_{j}^{j+1}-k_{(j-1)j}\Delta _{j-1}^{j}-k_{jj}\Delta _{j}^{j}}{k_{12}k_{23}\cdots ak_{(j-1)j}},
\]%
and$\ \Delta _{j}^{j+1},\ \Delta _{j-1}^{j}\ ,\ \Delta _{j}^{j}\ $are known because the columns $\left( j-1\right) $ and $j$ of $B$ had been identified above (recurrence hypothesis).\\
We have :%
\[
\beta _{i}^{j+1}=\dfrac{\lambda _{i}^{\ast }\beta _{i}^{j}-k_{(j-1)j}\beta_{i}^{j-1}-k_{jj}\beta _{i}^{j}}{k_{(j+1)j}}\;,\;\;\forall i\in \left\{1,2,\ldots ,n\right\},
\]%
and all the parameters of $\beta _{i}^{j+1}$ expression are known, so it can be identified. This means that the column $j$\ then the row $j$ of the matrix $A$ are identified and finally the column \ $j+1$\ of the matrix $B$
is identified for all $j\in \left\{ 1,2,\ldots ,n-1\right\} $.\\
To identify $k_{nn}$, it suffices to remark that:%
\[
k_{nn}=-k_{n(n-1)},
\]%
and the coefficient $k_{n(n-1)}$ had been identified above.
\end{proof}
\subsection{Identification's algorithm.}\textcolor[rgb]{1.00,1.00,1.00}{aaa}\vspace{0.2cm}\\
The results of the Proposition \ref{prop1} can be used for the algorithm's
initialization and the results of the Proposition \ref{prop2} can be used
to define iterations which permits to identify at each step the
corresponding column and row of the tridiagonal compartmental matrix and the
corresponding column of the partial measures matrix.
\section{Conclusion}
The problem of the minimal number of compartment to measure for identify a
compartmental system had been totaly solved for reversibles catenarys
systems. Moreover, with the obtained results, we can construct an
identification algorithm for reversibles catenarys systems with just the
measure of the first compartment.

\end{document}